\providecommand{\U}[1]{\protect\rule{.1in}{.1in}}
\newtheorem{theorem}{Theorem}
\theoremstyle{plain}
\newtheorem{definition}{Definition}
\newtheorem{lemma}{Lemma}
\newtheorem{remark}{Remark}
\numberwithin{equation}{section}
\begin{document}
\title[Second order delayed discrete systems]{Explicit solution of second-order delayed discrete equations}
\author{Nazim I. Mahmudov$^{1,2}$}
\address{$^{1}$Department of Mathematics Eastern Mediterranean University \\
Famagusta, 99628, T.R. North Cyprus Mersin 10, Turkey\\
$^{2}$Research Center of Econophysics Azerbaijan State University\\
of Economics (UNEC) Istiqlaliyyat Str. 6, Baku 1001, Azerbaijan}
\email{nazim.mahmudov@emu.edu.tr}
\subjclass[2000]{Primary 93B05; Secondary 34H05}
\keywords{Explicit solutions, Second-order difference, Discrete delay system, binomial
formula, noncommutative matrices, $\mathcal{Z}$, iterative learning control}

\begin{abstract}
A system of inhomogeneous second-order difference equations with linear parts
given by noncommutative matrix coefficients are considered. Closed form of its
solution is derived by means of newly defined delayed matrix sine/cosine using
the $\mathcal{Z}$-transform and determining function.

\end{abstract}
\maketitle

\section{Introduction}

In what follows we use the following notations:

\begin{itemize}
\item $\Theta$ is a zero matrix, and $I$ is an identity matrix;

\item $Z_{a}^{b}:=\left\{  a,a+1,...,b\right\}  $ for $a,b\in Z\cup\left\{
\pm\infty\right\}  $, $a\leq b$;

\item $\mathbb{M}_{r\times p}$ is the space of $r\times p$ matrices;$\times$

\item An empty sum $%
{\displaystyle\sum_{i=a}^{b}}
z\left(  i\right)  =0$ and an empty product $%
{\displaystyle\prod_{i=a}^{b}}
z\left(  i\right)  =1$ for integers $a<b$, where $z\left(  i\right)  $ is a
given function which does not have to be defined for each $i\in Z_{b}^{a}$ in
this case;

\item $x\left(  t+1\right)  -x\left(  t\right)  =:\Delta x\left(  t\right)  $
is the forward difference operator;

\item $x\left(  t+2\right)  -2x\left(  t+1\right)  +x\left(  t\right)
=:\Delta^{2}x\left(  t\right)  .$
\end{itemize}

Approximately two decades ago, Diblik and Khusainov \cite{3a}, \cite{4a}
introduced explicit representations for solutions to discrete linear systems
with a single pure delay using delayed discrete exponential matrices. Later,
Khusainov et al. \cite{12a} extended this approach to derive analytical
solutions for oscillatory second-order systems with pure delays by introducing
delayed discrete sine and cosine matrices. These pioneering contributions
spurred significant advancements in the analytical solutions of retarded
integer and fractional differential equations, as well as delayed discrete
systems, as seen in References \cite{6a}, \cite{9a}, \cite{10a}, \cite{21a}.
Building on these results, Diblik and Mor\c{c}ivkova \cite{7a}, \cite{8a}
extended the analysis to discrete linear systems with two pure delays, while
Pospi\c{s}il \cite{19a} applied the $\mathcal{Z}$-transform to address
multi-delayed systems with linear components represented by permutable matrix
coefficients. In 2018, Mahmudov \cite{16a} provided explicit solutions for
discrete linear delayed systems with nonconstant coefficients and
nonpermutable matrices, including first-order differences. Mahmudov \cite{17a}
later generalized these findings, removing the singularity condition on the
non-delayed coefficient matrix and deriving explicit solutions using the
$\mathcal{Z}$-transform. Furthermore, Diblik and Mencakova \cite{6a} presented
closed-form solutions for purely delayed discrete linear systems with
second-order differences, while Elshenhab and Wang \cite{22a} recently
addressed explicit representations for second-order difference systems with
multiple pure delays and noncommutative coefficient matrices.

These studies have yielded numerous insights into the qualitative theory of
discrete delay systems, encompassing stability analysis, optimal control
theory, and iterative learning control, as highlighted in References
\cite{2a}, \cite{5a}, \cite{14a}, \cite{15a}, \cite{18a}, and \cite{20a}.

Although significant progress has been made in studying linear discrete
systems and linear delayed discrete systems, research on iterative learning
control for delayed linear discrete systems with higher-order differences
remains limited. 

Therefore, motivated by \cite{6a}, \cite{16a}, \cite{22a}, we consider an
explicit representation of solutions of the following discrete second-order
systems with a single delay,%
\begin{equation}
\Delta^{2}y\left(  t\right)  +Ay\left(  t\right)  +By\left(  t-m\right)
=f\left(  t\right)  ,\ \ t\in\mathbb{Z}_{0}^{\infty},\ m\in\mathbb{Z}%
_{1}^{\infty} \label{dis1}%
\end{equation}
where $m$ is a delay, $A,B\in\mathbb{M}_{d\times d}$, $y:\mathbb{Z}%
_{-m}^{\infty}\rightarrow\mathbb{R}^{d}$ is a solution, $f:\mathbb{Z}%
_{0}^{\infty}\rightarrow\mathbb{R}^{d}$ is a function.

Let $\varphi:\mathbb{Z}_{-m}^{1}\rightarrow\mathbb{R}^{d}$ be a function. We
attach to (\ref{dis1}) the following initial conditions%
\begin{equation}
y\left(  t\right)  =\varphi\left(  t\right)  ,\ \ t\in\mathbb{Z}_{-m}^{1}.
\label{dis2}%
\end{equation}
It is well known that the initial value problem (\ref{dis1}), (\ref{dis2}) has
a unique solution in $\mathbb{Z}_{-m}^{\infty}$.

\section{Delayed discrete matriy sine/cosine}

One of the tools in this study is the $\mathcal{Z}$-transform defined as%
\[
\mathcal{Z}\left\{  f\left(  t\right)  \right\}  \left(  z\right)  =%
{\displaystyle\sum\limits_{t=0}^{\infty}}
\frac{f\left(  t\right)  }{z^{t}}\ \ \text{for}z\in\mathbb{R}.
\]
The $\mathcal{Z}$-transform is considered component-wisely, i.e. the
$\mathcal{Z}$-transform of a vector-valued function is a vector of
$\mathcal{Z}$-transformed coordinates.

\begin{definition}
\label{def:1} We say that the function $f:\mathbb{Z}_{0}^{\infty}%
\rightarrow\mathbb{R}^{d}$ is eyponentially bounded if there eyists
$b_{1},b_{2}>0$ such that%
\[
\left\Vert f\left(  t\right)  \right\Vert \leq b_{1}b_{2}^{t}\ \ \ \text{for
}\ t\in\mathbb{Z}_{0}^{\infty}.
\]

\end{definition}

\begin{lemma}
\label{def:01} The $\mathcal{Z}$-transform $\mathcal{Z}\left\{  f\left(
t\right)  \right\}  \left(  z\right)  $ of eyponentially bounded function
$f:\mathbb{Z}_{0}^{\infty}\rightarrow\mathbb{R}^{d}$ eyists for all $z$
sufficiently large.
\end{lemma}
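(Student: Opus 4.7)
The plan is to reduce the existence of $\mathcal{Z}\{f(t)\}(z)$ to the convergence of a geometric series. Since the $\mathcal{Z}$-transform is defined coordinate-wise, it suffices to show that the scalar series $\sum_{t=0}^{\infty} \|f(t)\|/|z|^t$ converges in norm for all sufficiently large $z$; componentwise absolute convergence will then follow from $|f_i(t)| \leq \|f(t)\|$.

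First I would invoke the exponential bound from Definition \ref{def:1}, so that there exist constants $b_1,b_2>0$ with $\|f(t)\| \leq b_1 b_2^t$ for every $t \in \mathbb{Z}_0^{\infty}$. Substituting into the tail of the transform yields
\[
\sum_{t=0}^{\infty} \frac{\|f(t)\|}{|z|^t} \leq b_1 \sum_{t=0}^{\infty} \left(\frac{b_2}{|z|}\right)^t.
\]
Next, I would choose $z$ with $|z|>b_2$; the right-hand side is then a convergent geometric series equal to $b_1 |z|/(|z|-b_2)$. By the comparison test, the series defining $\mathcal{Z}\{f(t)\}(z)$ converges absolutely in each coordinate, hence the vector-valued $\mathcal{Z}$-transform exists for all $z$ with $|z|>b_2$.

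I do not anticipate a genuine obstacle: the argument is a direct majorant comparison with a geometric series, and the only subtlety is bookkeeping between the vector norm and the componentwise definition of the transform, which is handled by the inequality $|f_i(t)|\leq \|f(t)\|$. The constant $b_2$ plays the role of the radius determining the region of convergence, so the lemma's phrase ``for all $z$ sufficiently large'' can be made quantitative as $|z|>b_2$ if desired.
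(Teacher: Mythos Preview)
Your argument is correct and is the standard proof: bound $\|f(t)\|$ by $b_1 b_2^t$ and compare with a geometric series for $|z|>b_2$. The paper itself states this lemma without proof, so there is nothing to compare against; your write-up fills the gap exactly as one would expect.
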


The next lemma gather up some features of the $\mathcal{Z}$-transform.

\begin{lemma}
\label{lem:1}Assume that $f_{1},f_{2}:\mathbb{Z}_{0}^{\infty}\rightarrow
\mathbb{R}^{d}$ are eyponentially bounded functions. The for sufficiently
large $z\in\mathbb{R}$ we have:

\begin{enumerate}
\item $\mathcal{Z}\left\{  af_{1}\left(  t\right)  +bf_{2}\left(  t\right)
\right\}  =a\mathcal{Z}\left\{  f_{1}\left(  t\right)  \right\}
+b\mathcal{Z}\left\{  f_{2}\left(  t\right)  \right\}  ,\ \ \ a,b\in
\mathbb{R};$

\item $\mathcal{Z}^{-1}\left\{  z^{-l}\right\}  \left(  t\right)
=\delta\left(  l,t\right)  \ $for $l\in Z_{0}^{\infty},$ where $\delta$ is the
Kroneker delta,%
\[
\delta\left(  l,t\right)  =\left\{
\begin{array}
[c]{c}%
1,\ \ \ t=l,\\
0,\ \ \ t\neq l.
\end{array}
\right.
\]

\item $\mathcal{Z}^{-1}\left\{  F_{1}\left(  z\right)  F_{2}\left(  z\right)
\right\}  \left(  t\right)  =\left(  f_{1}\ast f_{2}\right)  \left(  t\right)
$. Here the convolution operation $\ast$ is defined by%
\[
\left(  f\ast g\right)  \left(  t\right)  =\sum_{j=0}^{t}f\left(  j\right)
g\left(  t-j\right)  ;
\]

\item $\mathcal{Z}^{-1}\left\{  \frac{z}{z-1}\right\}  \left(  t\right)
=\sigma\left(  t\right)  \ $for $z>1$, here $\sigma$ is the step function
defined as%
\[
\sigma\left(  t\right)  =\left\{
\begin{array}
[c]{c}%
1,\ \ \ t\geq0,\\
0,\ \ \ t<0.
\end{array}
\right.
\]

\item $\mathcal{Z}^{-1}\left\{  \frac{1}{\left(  z-1\right)  ^{l}}\right\}
\left(  t\right)  =\left(
\begin{array}
[c]{c}%
t-1\\
l-1
\end{array}
\right)  \sigma\left(  t-l\right)  $, $l\in\mathbb{Z}_{0}^{\infty}$.

\item $\mathcal{Z}\left\{  f_{1}\left(  t+n\right)  \right\}  =z^{n}%
\mathcal{Z}\left\{  f_{1}\left(  t\right)  \right\}  -%
{\displaystyle\sum\limits_{j=0}^{n-1}}
f_{1}\left(  j\right)  z^{n-j}$, $n\in\mathbb{Z}_{0}^{\infty}.$
\end{enumerate}
\end{lemma}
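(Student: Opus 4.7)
The plan is to verify the six identities one by one, exploiting the fact that exponential boundedness guarantees absolute convergence of every series involved (by Lemma \ref{def:01}), so that interchanges of summation and term-by-term manipulations are legitimate for all sufficiently large $z$.

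For item (1) I would simply substitute $af_1+bf_2$ into the definition of $\mathcal{Z}$ and split the resulting series, which converges absolutely for $z$ large enough that both $\mathcal{Z}\{f_1\}$ and $\mathcal{Z}\{f_2\}$ converge. For item (2) the forward direction $\mathcal{Z}\{\delta(l,\cdot)\}(z)=z^{-l}$ is immediate from the definition, and invertibility of $\mathcal{Z}$ on exponentially bounded sequences gives the inverse statement. Item (4) is the standard geometric series $\sum_{t\ge 0}z^{-t}=z/(z-1)$ for $z>1$. Item (6) is a bookkeeping exercise: write $\mathcal{Z}\{f_1(t+n)\}=\sum_{t\ge0}f_1(t+n)z^{-t}$, reindex by $s=t+n$ to obtain $z^n\sum_{s\ge n}f_1(s)z^{-s}$, and subtract the missing terms $s=0,\dots,n-1$.

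The two steps that need a little more care are (3) and (5). For the convolution identity (3), I would expand the product
\[
F_1(z)F_2(z)=\sum_{s=0}^{\infty}\sum_{k=0}^{\infty}\frac{f_1(s)f_2(k)}{z^{s+k}},
\]
regroup by the level set $t=s+k$, and read off the coefficient of $z^{-t}$ as $\sum_{j=0}^{t}f_1(j)f_2(t-j)$; absolute convergence for large $z$ justifies the rearrangement, and then uniqueness of the $\mathcal{Z}$-transform yields the inverse formula. For (5), I would work forward: compute
\[
\mathcal{Z}\Bigl\{\binom{t-1}{l-1}\sigma(t-l)\Bigr\}(z)=\sum_{t=l}^{\infty}\binom{t-1}{l-1}z^{-t},
\]
set $k=t-l$, and invoke the negative binomial generating function $\sum_{k\ge 0}\binom{k+l-1}{l-1}x^{k}=(1-x)^{-l}$ with $x=z^{-1}$ to collapse the sum to $z^{-l}\cdot z^{l}/(z-1)^{l}=1/(z-1)^{l}$. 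Uniqueness of $\mathcal{Z}$ then gives the inverse formula claimed.

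The main obstacle, if any, will be item (5), since it rests on the negative binomial identity rather than on a direct manipulation of the definition. An alternative route, which I would keep in reserve, is an induction on $l$ using items (3) and (4): starting from $\mathcal{Z}^{-1}\{1/(z-1)\}(t)=\sigma(t-1)$, one writes $1/(z-1)^{l}=\bigl(1/(z-1)\bigr)\cdot\bigl(1/(z-1)^{l-1}\bigr)$, applies the convolution formula, and uses the hockey-stick identity $\sum_{j=l-1}^{t-1}\binom{j-1}{l-2}=\binom{t-1}{l-1}$ to close the induction. Everything else is a mechanical check.
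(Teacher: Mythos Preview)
Your proposal is sound, but there is nothing in the paper to compare it against: Lemma~\ref{lem:1} is stated without proof. The author treats these six identities as standard $\mathcal{Z}$-transform facts and moves directly to the determining matrix equation, so the paper offers no argument at all for this lemma. Your item-by-item verification is correct and self-contained; in particular your treatment of (5) via the negative binomial series (with the convolution/hockey-stick induction as a backup) is the standard route, and the remaining items are, as you say, mechanical. One small caveat worth flagging in your write-up: item~(5) as stated allows $l=0$, but your negative binomial argument (and indeed the formula itself, under the paper's convention $\binom{a}{b}=0$ for $a<0$) only makes sense for $l\in\mathbb{Z}_1^\infty$; this is a wrinkle in the paper's statement rather than in your proof.
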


We introduce the determining matrix equation for $Q\left(  t;s\right)  ,$
$t=1,2,...$
\begin{align*}
Q\left(  t+1;s\right)   &  =AQ\left(  t;s\right)  +BQ\left(  t;s-1\right)  ,\\
Q\left(  0;s\right)   &  =%
\begin{pmatrix}
0 & 0\\
0 & 0
\end{pmatrix}
,\ \ Q\left(  1;0\right)  =%
\begin{pmatrix}
1 & 0\\
0 & 1
\end{pmatrix}
,\\
t  &  =1,2,...10,\ s=1,2,..10.
\end{align*}
where $I$ is an identity matriy, $\Theta$ is a zero matriy.

\begin{remark}
\begin{enumerate}
\item Simple calculations show that%
\begin{equation}%
\begin{tabular}
[c]{|l|l|l|l|l|l|l|}\hline
& $s=0$ & $s=1$ & $s=2$ & $s=3$ & $\cdots$ & $s=p$\\\hline
$Q\left(  1;s\right)  $ & $I$ & $\Theta$ & $\Theta$ & $\Theta$ & $\cdots$ &
$\Theta$\\\hline
$Q\left(  2;s\right)  $ & $A$ & $B$ & $\Theta$ & $\Theta$ & $\cdots$ &
$\Theta$\\\hline
$Q\left(  3;s\right)  $ & $A^{2}$ & $AB+BA$ & $B^{2}$ & $\Theta$ & $\cdots$ &
$\Theta$\\\hline
$Q\left(  4;s\right)  $ & $A^{3}$ & $A\left(  AB+BA\right)  +BA^{2}$ &
$AB^{2}+B\left(  AB+BA\right)  $ & $B^{3}$ & $\cdots$ & \\\hline
$\cdots$ & $\cdots$ & $\cdots$ & $\cdots$ &  & $\cdots$ & $\Theta$\\\hline
$Q\left(  p+1;s\right)  $ & $A^{p}$ & $\cdots$ & $\cdots$ & $\cdots$ &
$\cdots$ & $B^{p}$\\\hline
$\vdots$ & $\vdots$ & $\vdots$ & $\vdots$ & $\vdots$ & $\vdots$ & $\ddots
$\\\hline
\end{tabular}
\label{tab}%
\end{equation}

\item If $A$ and $B$ are commutative, that is, $AB=BA$, then we have%
\[
\ Q\left(  t+1;j\right)  =\left(
\begin{array}
[c]{c}%
t\\
j
\end{array}
\right)  A^{t-j}B^{j}\sigma\left(  t-j\right)  .
\]

\item If $A=\Theta$, then%
\[
\ Q\left(  t+1;j\right)  =\left\{
\begin{tabular}
[c]{ll}%
$\Theta,$ & $t+1\neq j,$\\
$B^{j},$ & $t+1=j.$%
\end{tabular}
\right.  .
\]

\end{enumerate}
\end{remark}

\begin{definition}
The delayed discrete matrix $M_{c}(t,A,m)$ is defined as:
\[
M_{c}(t,A,m):=%
\begin{cases}
\Theta, & \text{if }t\in\mathbb{Z}_{-\infty}^{-m-1},\\
I, & \text{if }t\in\mathbb{Z}_{-m}^{1},\\
I-A\binom{t}{2}+A^{2}\binom{t-m}{4}-\cdots+(-1)^{l}A^{l}\binom{t-(l-1)m}%
{2l}, & \text{if }t\in\mathbb{Z}_{(l-1)(m+2)+2}^{l(m+2)+1},\,l=0,1,2,\ldots.
\end{cases}
\]
Here:

\begin{itemize}
\item $\Theta$ represents the zero matrix.

\item $I $ is the identity matrix.

\item $\binom{a}{b} $ denotes the binomial coefficient, defined as $\binom
{a}{b} = \frac{a!}{b!(a-b)!} $, with $\binom{a}{b} = 0 $ if $b > a $ or $a < 0
$.
\end{itemize}
\end{definition}

\begin{definition}
\cite{6a}The delayed discrete matrix $M_{s}(t,A,m)$ is defined as:
\[
M_{s}(t,A,m):=%
\begin{cases}
\Theta, & \text{if }t\in\mathbb{Z}_{-\infty}^{-m},\\
I\binom{t+m}{1}, & \text{if }t\in\mathbb{Z}_{-m+1}^{2},\\
I\binom{t+m}{1}-A\binom{t}{3}+A^{2}\binom{t-m}{5}-\cdots+(-1)^{l}A^{l}%
\binom{t-(l-1)m}{2l+1}, & \text{if }t\in\mathbb{Z}_{(l-1)(m+2)+3}%
^{l(m+2)+2},\,l=0,1,2,\ldots.
\end{cases}
\]

\end{definition}

\begin{definition}
\cite{6a}Delayed discrete matrix sine/cosine is defined as follows%
\begin{align*}
\text{Sin}\ ^{A,B}\left(  t\right)   &  :=\sum_{l=0}^{\left\lfloor \frac
{t-1}{m+2}\right\rfloor }{\sum\limits_{0\leq i\leq l}}\left(  -1\right)
^{l}\left(
\begin{array}
[c]{c}%
t-im\\
2l+1
\end{array}
\right)  Q\left(  l+1;i\right)  :\mathbb{Z}_{0}^{\infty}\rightarrow
\mathbb{M}_{n\times n},\\
\text{Cos}^{A,B}\left(  t\right)   &  :=\sum_{l=0}^{\left\lfloor \frac{t}%
{m+2}\right\rfloor }{\sum\limits_{0\leq i\leq l}}\left(  -1\right)
^{l}\left(
\begin{array}
[c]{c}%
t-im\\
2l
\end{array}
\right)  Q\left(  l+1;i\right)  :\mathbb{Z}_{0}^{\infty}\rightarrow
\mathbb{M}_{n\times n}.
\end{align*}

\end{definition}

\begin{remark}
If $A=\Theta$, then%
\begin{align*}
\text{Sin}\ ^{\Theta,B}\left(  t+m\right)   &  :=\sum_{l=0}^{\left\lfloor
\frac{t+m-1}{m+2}\right\rfloor }\left(  -1\right)  ^{l}\left(
\begin{array}
[c]{c}%
t+m-lm\\
2l+1
\end{array}
\right)  B^{l}=M_{s}(t,B,m),\\
\text{Cos\ }^{\Theta,B}\left(  t+m\right)   &  :=\sum_{l=0}^{\left\lfloor
\frac{t+m}{m+2}\right\rfloor }\left(  -1\right)  ^{l}\left(
\begin{array}
[c]{c}%
t+m-lm\\
2l
\end{array}
\right)  B^{l}=M_{c}(t,B,m).
\end{align*}

\end{remark}

\begin{lemma}
[Binomial formula for noncommuative matrices]\label{lem:q1}Let $A,B\in
\mathbb{M}_{d\times d}$ be two noncommutative matrices. Then for any
$t\in\mathbb{Z}_{0}^{\infty}$ we have
\begin{equation}
\left(  A+B\right)  ^{t}=%
{\displaystyle\sum\limits_{i=0}^{t}}
Q\left(  t+1;i\right)  . \label{b1}%
\end{equation}

\end{lemma}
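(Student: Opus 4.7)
The plan is to prove the identity by induction on $t$, using only the defining recurrence of $Q$. Before starting the induction, I would establish the following support property: $Q(t+1;s) = \Theta$ whenever $s > t$ or $s < 0$. This is essentially the statement that the table (\ref{tab}) is zero above the diagonal, and it follows by a short induction on $t$ from $Q(t+1;s) = AQ(t;s) + BQ(t;s-1)$ together with $Q(1;0) = I$ and $Q(1;s) = \Theta$ for $s \neq 0$. Having this in hand will let me freely extend sums by adding harmless zero terms at the endpoints.

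With the support property established, the base case $t=0$ is immediate: $(A+B)^{0} = I = Q(1;0) = \sum_{i=0}^{0} Q(1;i)$. For the inductive step, I assume $(A+B)^{t} = \sum_{i=0}^{t} Q(t+1;i)$ and left-multiply both sides by $A+B$, producing the two sums $A\sum_{i=0}^{t} Q(t+1;i)$ and $B\sum_{i=0}^{t} Q(t+1;i)$. In the second sum I reindex $i \mapsto i-1$, turning it into $\sum_{i=1}^{t+1} BQ(t+1;i-1)$.

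Next I would pad the first sum with the term $AQ(t+1;t+1)$ and the reindexed second sum with $BQ(t+1;-1)$; both of these are $\Theta$ by the support property, so nothing is changed. The two sums then run over the same index set $\{0,1,\ldots,t+1\}$ and can be combined termwise, yielding
\[
(A+B)^{t+1} = \sum_{i=0}^{t+1}\bigl[AQ(t+1;i) + BQ(t+1;i-1)\bigr] = \sum_{i=0}^{t+1} Q(t+2;i),
\]
where the last equality is exactly the recurrence for $Q$. This closes the induction and establishes (\ref{b1}).

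I do not anticipate a genuine obstacle: the argument is the noncommutative analogue of the standard Pascal-rule proof of the classical binomial theorem, with $Q(t+1;i)$ in place of $\binom{t}{i}A^{t-i}B^{i}$. The only point requiring care is the bookkeeping at the endpoints $i = -1$ and $i = t+1$ when the two sums are reindexed and merged, which is precisely why the auxiliary support property is isolated as a preliminary step.
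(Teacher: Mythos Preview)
Your proposal is correct and follows essentially the same route as the paper: induction on $t$, left-multiplication by $A+B$, reindexing the $B$-sum, padding the endpoints with the vanishing terms $Q(n+1;n+1)$ and $Q(n+1;-1)$, and then invoking the recurrence for $Q$. The only cosmetic difference is that you isolate the support property $Q(t+1;s)=\Theta$ for $s>t$ or $s<0$ as an explicit preliminary step, whereas the paper simply cites $Q(n+1;n+1)=\Theta=Q(n+1;-1)$ at the moment it is needed.
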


\begin{proof}
From table (\ref{tab}) it can be easily seen that for $t=0,1,2$ the identity
(\ref{b1}) is true. Now we use induction, assuming that (\ref{b1}) is true for
$t=n$ we prove it for $t=n+1$
\begin{align*}
\left(  A+B\right)  ^{n+1}  &  =\left(  A+B\right)  {\sum\limits_{i=0}^{n}%
}Q\left(  n+1;i\right) \\
&  ={\sum\limits_{i=0}^{n}}AQ\left(  n+1;i\right)  +{\sum\limits_{i=0}^{n}%
}BQ\left(  n+1;i\right) \\
&  ={\sum\limits_{i=0}^{n}}AQ\left(  n+1;i\right)  +{\sum\limits_{i=1}^{n+1}%
}BQ\left(  n+1;i-1\right) \\
&  ={\sum\limits_{i=0}^{n+1}}A\left[  Q\left(  n+1;i\right)  +BQ\left(
n+1;i-1\right)  \right] \\
&  ={\sum\limits_{i=0}^{n+1}}Q\left(  n+2;i\right)  .
\end{align*}
Here we used the property $Q\left(  n+1;n+1\right)  =\Theta=Q\left(
n+1;-1\right)  .$
\end{proof}

\begin{lemma}
[Gronwall inequality]\cite{1a} Let%
\[
y\left(  t\right)  \leq b\left(  t\right)  +a\left(  t\right)  {\sum
\limits_{j=0}^{t-1}}f\left(  j\right)  y\left(  j\right)  ,\ \ t\in
\mathbb{Z}_{0}^{\infty}.
\]
Then%
\[
y\left(  t\right)  \leq b\left(  t\right)  +a\left(  t\right)  {\sum
\limits_{j=0}^{t-1}}b\left(  j\right)  f\left(  j\right)  {\prod
\limits_{i=j+1}^{t-1}}\left(  1+a\left(  j\right)  f\left(  i\right)  \right)
,\ \ t\in\mathbb{Z}_{0}^{\infty}.
\]

\end{lemma}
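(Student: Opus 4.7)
The stated bound is the classical discrete Gronwall inequality. My plan is to convert the hypothesis into a scalar linear first-order difference inequality for the partial sum $S(t) := \sum_{j=0}^{t-1} f(j)\, y(j)$, solve that recursion explicitly, and then substitute the result back into the pointwise bound on $y(t)$.

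Concretely, since $S(0)=0$ and $S(t+1)-S(t)=f(t)\,y(t)$, applying the hypothesis $y(t)\leq b(t)+a(t)\,S(t)$ yields the scalar linear recursion
\[
S(t+1)\ \leq\ \bigl(1+a(t)\,f(t)\bigr)\,S(t)\,+\,b(t)\,f(t).
\]
A straightforward induction on $t$, using the empty-sum and empty-product conventions recalled in the preamble, then produces the closed-form estimate
\[
S(t)\ \leq\ \sum_{j=0}^{t-1} b(j)\,f(j)\,\prod_{i=j+1}^{t-1}\bigl(1+a(i)\,f(i)\bigr).
\]
Inserting this into $y(t)\leq b(t)+a(t)\,S(t)$ gives the claim.

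The work is essentially all in Step~1 (the reduction), after which everything is a routine iteration; the only substantive obstacle is index bookkeeping and the implicit sign hypotheses. One must verify the base case $t=0$ (where both sides collapse to $y(0)\leq b(0)$ via the empty-sum convention), and one must assume $a,f\geq 0$ so that multiplying the induction hypothesis through by $1+a(t)f(t)$ preserves the inequality. A minor point to flag is that the natural iteration produces $a(i)$ inside the product, whereas the stated formula displays $a(j)$; if this is not a typographical slip, reconciling it would require an extra monotonicity assumption on $a$, or the implicit understanding that $a$ is constant in $t$.
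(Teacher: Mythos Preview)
Your argument is the standard one and is correct under the implicit nonnegativity hypotheses you identify; your observation that the iteration naturally produces $a(i)$ rather than the displayed $a(j)$ inside the product is also accurate, and this is indeed a typographical slip in the stated formula (the version in Agarwal's book has $a(i)$). Note, however, that the paper does not supply its own proof of this lemma at all: it is simply quoted from the reference \cite{1a}, so there is no ``paper's proof'' to compare against---you have filled in what the authors left to citation.
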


\begin{lemma}
\label{lem:2}For any $t\in\mathbb{Z}_{0}^{\infty}$ we have the following
identities%
\begin{align*}
\left(  A+B\sigma^{-m}\right)  ^{t}  &  ={\sum\limits_{\substack{i\left(
m+1\right)  \leq t\\i\geq0}}}Q\left(  t+1-im;i\right)  ,\\
\ \ \ \left(  A+Bz^{-m}\right)  ^{t}  &  ={\sum\limits_{\substack{i\left(
m+1\right)  \leq t\\i\geq0}}}z^{-im}Q\left(  t+1;i\right)  .\\
\mathcal{Z}^{-1}\left\{  \left(  \left(  z-1\right)  ^{2}+A+\frac{B}{z^{m}%
}\right)  ^{-1}\right\}   &  =\sum_{l=0}^{\infty}{\sum\limits_{0\leq i\leq l}%
}\left(  -1\right)  ^{l}\left(
\begin{array}
[c]{c}%
t-im-1\\
2l+1
\end{array}
\right)  Q\left(  l+1;i\right)  ,\\
\mathcal{Z}^{-1}\left\{  \frac{1}{z^{j+m}}\left(  (z-1)^{2}+A+\frac{B}{z^{m}%
}\right)  ^{-1}\right\}   &  =\sum_{l=0}^{\infty}\sum_{0\leq i\leq l}%
(-1)^{l}\binom{t-j-m-im-1}{2l+1}Q(l+1;i),\\
\mathcal{Z}^{-1}\left\{  z\left(  z-1\right)  \left(  \left(  z-1\right)
^{2}+A+\frac{B}{z^{m}}\right)  ^{-1}\right\}   &  =\sum_{l=0}^{\infty}%
{\sum\limits_{0\leq i\leq l}}\left(  -1\right)  ^{l}\left(
\begin{array}
[c]{c}%
t-im\\
2l
\end{array}
\right)  Q\left(  l+1;i\right)  .
\end{align*}

\end{lemma}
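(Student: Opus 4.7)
The strategy rests on two ingredients: the noncommutative binomial formula (Lemma \ref{lem:q1}) to expand matrix powers, and the inverse $\mathcal{Z}$-transform rules of Lemma \ref{lem:1} (especially parts (3)--(5)) to read off time-domain formulas from their generating functions. The overall plan is to expand each matrix inverse as a Neumann series in $(z-1)^{-2}(A+Bz^{-m})$, insert the noncommutative binomial expansion of $(A+Bz^{-m})^{l}$, and apply $\mathcal{Z}^{-1}$ term by term.

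I would prove the first two identities directly from Lemma \ref{lem:q1}. Since the scalar $z^{-m}$ commutes with every matrix, the expansion of $(A+Bz^{-m})^{t}$ preserves the noncommutative ordering of $A$'s and $B$'s while each $B$ carries one factor of $z^{-m}$; this yields $(A+Bz^{-m})^{t}=\sum_{i} z^{-im} Q(t+1;i)$, where the range of $i$ is automatically cut off by the vanishing of $Q(t+1;i)$ for $i$ too large. The first identity has the analogous form with $\sigma^{-m}$ in place of $z^{-m}$, reflecting the time-domain interpretation in which each $B$ contributes a delay of $m$; absorbing the delay into the first argument of $Q$ explains the shift from $Q(t+1;i)$ to $Q(t+1-im;i)$ and the truncation $i(m+1)\le t$ (equivalently, $t+1-im \geq i+1$, which is exactly the non-vanishing range of $Q$).

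For the third identity, I would factor out $(z-1)^{-2}$ and use the Neumann expansion
\[
\bigl((z-1)^{2}+A+Bz^{-m}\bigr)^{-1}=\sum_{l=0}^{\infty}(-1)^{l}\,(z-1)^{-(2l+2)}(A+Bz^{-m})^{l},
\]
valid for sufficiently large $|z|$. Substituting the second identity for $(A+Bz^{-m})^{l}$ yields a double sum in $l$ and $i$. Lemma \ref{lem:1}(5) supplies $\mathcal{Z}^{-1}\{(z-1)^{-(2l+2)}\}(t)=\binom{t-1}{2l+1}\sigma(t-2l-2)$, and the factor $z^{-im}$ induces (via parts (2)--(3) of Lemma \ref{lem:1}, i.e.\ convolution with a Kronecker delta) a shift by $im$ in the time argument. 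Combined, these give the claimed formula with $\binom{t-im-1}{2l+1}Q(l+1;i)$; the indicator $\sigma(t-im-2l-2)$ is absorbed into the vanishing of the binomial coefficient when its upper argument is too small.

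The fourth and fifth identities follow by the same template with cosmetic modifications. The extra prefactor $z^{-(j+m)}$ in the fourth identity merely shifts the time variable by a further $j+m$, changing $\binom{t-im-1}{2l+1}$ to $\binom{t-j-m-im-1}{2l+1}$. In the fifth identity, the prefactor $z(z-1)$ converts $(z-1)^{-(2l+2)}$ into $z(z-1)^{-(2l+1)}$, and a short computation with Lemma \ref{lem:1}(5)--(6) yields $\mathcal{Z}^{-1}\{z(z-1)^{-(2l+1)}\}(t)=\binom{t}{2l}\sigma(t-2l)$; the $z^{-im}$ shift then produces $\binom{t-im}{2l}$, exactly as claimed. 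The only real obstacle is justifying the term-by-term inverse transform of the infinite Neumann series; this is handled by working for $|z|$ large enough that the series converges absolutely (Lemma \ref{def:01}) and noting that at each fixed $t$ only finitely many $l$'s contribute (since $\binom{t-im-1}{2l+1}=0$ once $2l+1>t-im-1$), so the time-domain expressions are actually finite sums and the interchange of $\mathcal{Z}^{-1}$ with $\sum_{l}$ is automatic.
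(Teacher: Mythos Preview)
Your proposal is correct and follows essentially the same route as the paper: both invoke Lemma~\ref{lem:q1} for the first two identities, then expand $\bigl((z-1)^{2}+A+Bz^{-m}\bigr)^{-1}$ as a Neumann series in $(z-1)^{-2}(A+Bz^{-m})$, substitute the noncommutative binomial expansion, and invert term by term via Lemma~\ref{lem:1}(2)--(5) with the $z^{-im}$ factor handled as a Kronecker-delta convolution shift. Your explicit remark that only finitely many $l$ contribute at each fixed $t$ is a welcome addition that the paper leaves implicit.
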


\begin{proof}
The first two identities follows from Lemma \ref{lem:q1}. We start with the
identity:
\[
\left(  I-C\right)  ^{j}\sum_{t=0}^{\infty}\binom{t+j-1}{j-1}C^{t}%
=I,\quad\text{where }\Vert C\Vert<1.
\]
For sufficiently large $z\in\mathbb{R}$ such that
\[
\left\Vert \frac{A}{(z-1)^{2}}+\frac{B}{z^{m}(z-1)^{2}}\right\Vert <1,
\]
we derive:
\begin{align*}
\left(  z^{2}-2z+1+A+\frac{B}{z^{m}}\right)  ^{-1}  &  =\frac{1}{(z-1)^{2}%
}\left(  I+\frac{A}{(z-1)^{2}}+\frac{B}{z^{m}(z-1)^{2}}\right)  ^{-1}\\
&  =\frac{1}{(z-1)^{2}}\sum_{j=0}^{\infty}(-1)^{j}\left(  \frac{A}{(z-1)^{2}%
}+\frac{B}{z^{m}(z-1)^{2}}\right)  ^{j}\\
&  =\frac{1}{(z-1)^{2}}\sum_{t=0}^{\infty}\frac{(-1)^{t}}{(z-1)^{2t}}\left(
A+\frac{1}{z^{m}}B\right)  ^{t}.
\end{align*}

Next we use the formulas:
\[
\left(  A+Bz^{-m}\right)  ^{t}=\sum_{0\leq i\leq t}z^{-im}Q(t+1;i),
\]
\[
\sum_{j=0}^{t}\delta(im,j)\binom{t-j-1}{2l+1}=\binom{t-im-1}{2l+1},
\]
and
\[
\delta(j+m+im,t)\ast\binom{t-1}{2l-1}=\binom{t-j-m-im-1}{2l-1}.
\]

Now, consider the inverse $\mathcal{Z}$-transform of the original series:
\begin{align*}
\mathcal{Z}^{-1}\left\{  \frac{1}{(z-1)^{2}}\sum_{l=0}^{\infty}\frac{(-1)^{l}%
}{(z-1)^{2l}}\left(  A+\frac{1}{z^{m}}B\right)  ^{l}\right\}   &  =\sum
_{l=0}^{\infty}\mathcal{Z}^{-1}\left\{  \frac{(-1)^{l}}{(z-1)^{2l+2}}\left(
A+\frac{1}{z^{m}}B\right)  ^{l}\right\} \\
&  =\sum_{l=0}^{\infty}\mathcal{Z}^{-1}\left\{  \sum_{0\leq i\leq l}%
\frac{(-1)^{l}}{z^{im}(z-1)^{2l+2}}Q(l+1;i)\right\} \\
&  =\sum_{l=0}^{\infty}\sum_{0\leq i\leq l}(-1)^{t}\delta(im,t)\ast\binom
{t-1}{2l+1}Q(l+1;i)\\
&  =\sum_{l=0}^{\infty}\sum_{0\leq i\leq l}(-1)^{l}\binom{t-im-1}%
{2l+1}Q(l+1;i).
\end{align*}

Using similar steps, for $A_{j}(t) $, we find:
\begin{align*}
A_{j}(t)  &  = \mathcal{Z}^{-1} \left\{  \frac{1}{z^{j+m}} \left(  (z-1)^{2} +
A + \frac{B}{z^{m}}\right)  ^{-1} \right\} \\
&  = \sum_{l=0}^{\infty}\sum_{0 \leq i \leq l} (-1)^{l} \delta(j + m + im, t)
\ast\binom{t-1}{2l+1} Q(l+1; i)\\
&  = \sum_{l=0}^{\infty}\sum_{0 \leq i \leq l} (-1)^{l} \binom{t-j-m-im-1}%
{2l+1} Q(l+1; i).
\end{align*}

For the third $\mathcal{Z}^{-1}$-transform, we have
\begin{align*}
\mathcal{Z}^{-1}\left\{  \frac{z}{z-1}\sum_{l=0}^{\infty}\frac{(-1)^{l}%
}{(z-1)^{2l}}\left(  A+\frac{1}{z^{m}}B\right)  ^{l}\right\}   &  =\sum
_{l=0}^{\infty}\mathcal{Z}^{-1}\left\{  \frac{(-1)^{l}}{(z-1)^{2l+1}}\left(
A+\frac{1}{z^{m}}B\right)  ^{l}\right\} \\
&  =\sum_{l=0}^{\infty}\sum_{0\leq i\leq l}(-1)^{l}\mathcal{Z}^{-1}\left\{
\frac{1}{z^{im-1}(z-1)^{2l+1}}\right\}  Q(l+1;i)\\
&  =\sum_{l=0}^{\infty}\sum_{0\leq i\leq l}(-1)^{l}\delta(im-1,t)\ast
\binom{t-1}{2l}Q(l+1;i)\\
&  =\sum_{l=0}^{\infty}\sum_{0\leq i\leq l}(-1)^{l}\binom{t-im}{2l}Q(l+1;i).
\end{align*}

\end{proof}

\begin{definition}
Delayed discrete matrix sine/cosine is defined as follows%
\begin{align*}
\text{Sin}\ ^{A,B}\left(  t\right)   &  :=\sum_{l=0}^{\infty}{\sum
\limits_{0\leq i\leq l}}\left(  -1\right)  ^{l}\left(
\begin{array}
[c]{c}%
t-im\\
2l+1
\end{array}
\right)  Q\left(  l+1;i\right)  :\mathbb{Z}_{0}^{\infty}\rightarrow
\mathbb{M}_{n\times n},\\
\text{Cos}^{A,B}\left(  t\right)   &  :=\sum_{l=0}^{\infty}{\sum\limits_{0\leq
i\leq l}}\left(  -1\right)  ^{l}\left(
\begin{array}
[c]{c}%
t-im\\
2l
\end{array}
\right)  Q\left(  l+1;i\right)  :\mathbb{Z}_{0}^{\infty}\rightarrow
\mathbb{M}_{n\times n}%
\end{align*}

\end{definition}

\begin{lemma}
For all $t\in\mathbb{Z}_{0}^{\infty},$one has%
\begin{equation}
\left\Vert \text{Sin}\ ^{A,B}\left(  t\right)  \right\Vert \leq l_{s}\left(
t\right)  ,\ \ \ \left\Vert \text{Cos}\ ^{A,B}\left(  t\right)  \right\Vert
\leq l_{c}\left(  t\right)  . \label{q7}%
\end{equation}

\end{lemma}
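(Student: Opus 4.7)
My plan is to reduce the norm estimates on the matrix sine and cosine series to a clean termwise bound on $Q(l+1;i)$, and then obtain $l_s(t)$ and $l_c(t)$ essentially by reading off the resulting majorant series.

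The first step is to establish the auxiliary bound
\[
\|Q(l+1;i)\| \;\le\; \binom{l}{i}\,\|A\|^{l-i}\,\|B\|^{i}, \qquad 0\le i\le l,
\]
by induction on $l$, using the defining recurrence $Q(l+1;i) = A\,Q(l;i) + B\,Q(l;i-1)$ together with $Q(1;0)=I$ and $Q(1;s)=\Theta$ for $s\ge 1$. The inductive step is just Pascal's rule: the two contributions give $\binom{l-1}{i}+\binom{l-1}{i-1} = \binom{l}{i}$. This is consistent with Lemma \ref{lem:q1}, which yields the trivial bound $\|(A+B)^t\|\le(\|A\|+\|B\|)^t=\sum_i\binom{t}{i}\|A\|^{t-i}\|B\|^i$ in the aggregate.

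The second step is to apply the triangle inequality directly to the definitions of $\operatorname{Sin}^{A,B}(t)$ and $\operatorname{Cos}^{A,B}(t)$. Since $\binom{t-im}{2l+1}=0$ whenever $2l+1 > t-im$ (and similarly $\binom{t-im}{2l}=0$ for $2l>t-im$), only finitely many terms in $l$ contribute: the nonzero range is $l\le \lfloor (t-1)/(m+2)\rfloor$ for the sine series and $l\le\lfloor t/(m+2)\rfloor$ for the cosine series. Combining with the bound from Step 1 I define
\[
l_s(t) \;:=\; \sum_{l=0}^{\lfloor (t-1)/(m+2)\rfloor}\sum_{0\le i\le l}\binom{t-im}{2l+1}\binom{l}{i}\,\|A\|^{l-i}\|B\|^{i},
\]
\[
l_c(t) \;:=\; \sum_{l=0}^{\lfloor t/(m+2)\rfloor}\sum_{0\le i\le l}\binom{t-im}{2l}\binom{l}{i}\,\|A\|^{l-i}\|B\|^{i},
\]
so that (\ref{q7}) follows immediately.

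The only mildly delicate point is matching the summation range: one has to verify that terms with $l>\lfloor(t-1)/(m+2)\rfloor$ (resp.\ $\lfloor t/(m+2)\rfloor$) indeed vanish because the binomial coefficient $\binom{t-im}{2l+1}$ (resp.\ $\binom{t-im}{2l}$) is zero for every admissible $i\le l$. This reduces to checking that the largest value of $t-im$ in the inner sum, namely $t$ when $i=0$, is strictly smaller than $2l+1$ (resp.\ $2l$) once $l$ exceeds the stated floors. After that observation the bound is transparent and no further estimation is required, so there is no serious analytic obstacle.
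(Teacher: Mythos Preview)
Your approach is exactly the paper's: apply the triangle inequality termwise and use the estimate $\|Q(l+1;i)\|\le\binom{l}{i}\|A\|^{l-i}\|B\|^{i}$, then declare the resulting majorant to be $l_s(t)$ (resp.\ $l_c(t)$). You supply the inductive proof of the $Q$-bound, which the paper uses without justification.

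There is, however, a slip in your ``mildly delicate point''. You claim that for $l>\lfloor(t-1)/(m+2)\rfloor$ the binomial $\binom{t-im}{2l+1}$ vanishes for \emph{every} $i\in\{0,\dots,l\}$, and you reduce this to checking the case $i=0$. But at $i=0$ the condition $\binom{t}{2l+1}=0$ means $2l+1>t$, i.e.\ $l>\lfloor(t-1)/2\rfloor$, which for $m\ge 1$ is a strictly larger threshold than $\lfloor(t-1)/(m+2)\rfloor$. Concretely, with $m=1$, $t=5$, $l=2$, $i=0$ one has $\binom{5}{5}Q(3;0)=A^{2}\neq\Theta$, yet this term is excluded from your $l_s(5)$. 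The easy fix is to take the outer sum in the definition of $l_s(t)$ up to $\lfloor(t-1)/2\rfloor$ (and $l_c(t)$ up to $\lfloor t/2\rfloor$), or simply to $\infty$ and let the binomial convention do the truncation. The paper's own displayed proof uses the upper limit $\lfloor(t-m-1)/(m+2)\rfloor$ and therefore shares this defect; since $l_s$ and $l_c$ are being \emph{defined} here, one must choose limits that do not discard nonzero summands.
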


\begin{proof}
We only the first inequality
\begin{align*}
\left\Vert \text{Sin}\ ^{A,B}\left(  t\right)  \right\Vert  &  \leq\sum
_{l=0}^{\left\lfloor \frac{t-m-1}{m+2}\right\rfloor }{\sum\limits_{0\leq i\leq
l}}\left(
\begin{array}
[c]{c}%
t-im\\
2l+1
\end{array}
\right)  \left\Vert Q\left(  l+1;i\right)  \right\Vert \\
&  \leq\sum_{l=0}^{\left\lfloor \frac{t-m-1}{m+2}\right\rfloor }%
{\sum\limits_{0\leq i\leq l}}\left(
\begin{array}
[c]{c}%
t-im\\
2l+1
\end{array}
\right)  \left(
\begin{array}
[c]{c}%
l\\
i
\end{array}
\right)  \left\Vert A\right\Vert ^{l-i}\left\Vert B\right\Vert ^{i}\\
&  \leq l_{s}\left(  t\right)  .
\end{align*}

\end{proof}

\begin{lemma}
\label{lem:eb}Under exponential boundedness of $f:\mathbb{Z}_{0}^{\infty
}\rightarrow\mathbb{R}^{d},$ a solution of (\ref{dis1}), (\ref{dis2}) has the
same property, that is, exponentially bounded.
\end{lemma}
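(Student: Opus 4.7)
The plan is to convert (\ref{dis1}) into an explicit one-step recursion on a non-decreasing envelope of $\|y\|$ and then iterate a scalar linear inequality. First I expand $\Delta^{2}y(t)=y(t+2)-2y(t+1)+y(t)$ in (\ref{dis1}) to rewrite the system in the explicit form
\[
y(t+2)=2y(t+1)-(I+A)\,y(t)-B\,y(t-m)+f(t),\qquad t\in\mathbb{Z}_{0}^{\infty}.
\]
Taking norms and applying the hypothesis $\|f(t)\|\le b_{1}b_{2}^{t}$ yields
\[
\|y(t+2)\|\le 2\|y(t+1)\|+(1+\|A\|)\|y(t)\|+\|B\|\|y(t-m)\|+b_{1}b_{2}^{t}.
\]

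Next I introduce the non-decreasing envelope $u(t):=\max_{s\in\mathbb{Z}_{-m}^{t}}\|y(s)\|$, which is finite because the initial function $\varphi$ in (\ref{dis2}) lives on the finite set $\mathbb{Z}_{-m}^{1}$, so $u(1)\le\|\varphi\|_{\infty}<\infty$. For every $t\ge 0$, the three indices $t+1$, $t$ and $t-m$ all lie in $\mathbb{Z}_{-m}^{t+1}$ (using $m\ge 1$ and $t\ge 0$) and are therefore each dominated by $u(t+1)$. Collecting coefficients and using $u(t+2)=\max\{u(t+1),\|y(t+2)\|\}$ together with $c\ge 1$ produces the scalar first-order linear recurrence inequality
\[
u(t+2)\le c\,u(t+1)+b_{1}b_{2}^{t},\qquad c:=3+\|A\|+\|B\|.
\]

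Finally, I iterate this scalar inequality to obtain an explicit telescoped bound of the form $u(s)\le c^{\,s-1}u(1)+b_{1}\sum_{j}c^{\,s-2-j}b_{2}^{j}$, which is dominated by $K\gamma^{\,t}$ with $\gamma:=1+\max(c,b_{2})$ and a constant $K$ depending only on $\|\varphi\|_{\infty}$, $b_{1}$, $b_{2}$, $\|A\|$, $\|B\|$. Since $\|y(t)\|\le u(t)$, this is the desired exponential bound. The discrete Gronwall inequality stated just before the lemma provides an alternative route once the one-step inequality is cast as a cumulative one, but direct iteration is cleaner here. I expect no conceptual obstacle; the only point requiring care is checking that the delayed index $t-m$ really falls within the range $[-m,t+1]$ covered by $u(t+1)$, which is the reason for choosing the envelope to start at $-m$ rather than at $0$.
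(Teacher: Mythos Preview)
Your argument is correct. The envelope $u(t)=\max_{-m\le s\le t}\|y(s)\|$ absorbs the delay term cleanly, the inequality $u(t+2)\le c\,u(t+1)+b_{1}b_{2}^{t}$ with $c=3+\|A\|+\|B\|\ge 1$ follows as you say, and its iteration yields $u(s)\le c^{s-1}u(1)+b_{1}\sum_{j=0}^{s-2}c^{s-2-j}b_{2}^{j}$, which is bounded by $K\gamma^{s}$ for any $\gamma>\max(c,b_{2})$ after absorbing the at-most-linear prefactor.

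Your route differs from the paper's. The paper sums the relation $\Delta y(r+1)=\Delta y(r)-Ay(r)-By(r-m)+f(r)$ twice to obtain a discrete variation-of-constants--type identity
\[
y(t)=\varphi(0)+t\,\Delta\varphi(0)-\sum_{j=0}^{t-1}(t-j)\bigl(Ay(j)+By(j-m)\bigr)+\sum_{j=0}^{t-1}(t-j)f(j),
\]
takes norms, and then invokes the discrete Gronwall inequality quoted just above the lemma. That approach mirrors the integral-equation viewpoint and reuses the Gronwall tool already stated, at the cost of the double-summation bookkeeping and a somewhat bulkier final estimate (a polynomial factor times an exponential). Your one-step envelope argument is more elementary: it bypasses Gronwall and the double sums entirely, and the delay is handled automatically by starting the maximum at $-m$. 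Either proof suffices; yours is shorter and self-contained, while the paper's fits more naturally with the $\mathcal{Z}$-transform machinery developed elsewhere in the article.
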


\begin{proof}%
\begin{align*}
{\sum}\Delta y\left(  t+1\right)   &  =\Delta y\left(  t\right)  -Ay\left(
t\right)  -By\left(  t-m\right)  +f\left(  t\right) \\
{\sum_{j=0}^{r-1}}\Delta y\left(  j+1\right)   &  ={\sum_{j=0}^{r-1}}\Delta
y\left(  j\right)  -{\sum_{j=0}^{r-1}}Ay\left(  j\right)  -{\sum_{j=0}^{r-1}%
}By\left(  j-m\right)  +{\sum_{j=0}^{r-1}}f\left(  j\right) \\
\Delta y\left(  r\right)   &  =\Delta\psi\left(  0\right)  -{\sum_{j=0}^{r-1}%
}Ay\left(  j\right)  -{\sum_{j=0}^{r-1}}By\left(  j-m\right)  +{\sum
_{j=0}^{r-1}}f\left(  j\right)
\end{align*}
Summing the above equality from $0$ to $t-1$, we obtain%
\[%
{\displaystyle\sum_{r=0}^{t-1}}
\Delta y\left(  r\right)  =%
{\displaystyle\sum_{r=0}^{t-1}}
\Delta\psi\left(  0\right)  -%
{\displaystyle\sum_{r=0}^{t-1}}
{\displaystyle\sum_{j=0}^{r-1}}
Ay\left(  j\right)  -%
{\displaystyle\sum_{r=0}^{t-1}}
{\displaystyle\sum_{j=0}^{r-1}}
By\left(  j-m\right)  +%
{\displaystyle\sum_{r=0}^{t-1}}
{\displaystyle\sum_{j=0}^{r-1}}
f\left(  j\right)
\]
equivalently%
\[
y\left(  t\right)  =\psi\left(  0\right)  +t\Delta\psi\left(  0\right)  -%
{\displaystyle\sum_{j=0}^{t-1}}
\left(  t-j\right)  Ay\left(  j\right)  -%
{\displaystyle\sum_{j=0}^{t-1}}
\left(  t-j\right)  By\left(  j-m\right)  +%
{\displaystyle\sum_{j=0}^{t-1}}
\left(  t-j\right)  f\left(  j\right)  .
\]
Taking the norm and applying the triangle inequality, we get%
\begin{align*}
\left\Vert y\left(  t\right)  \right\Vert  &  \leq\left\Vert \psi\left(
0\right)  \right\Vert +t\left\Vert \Delta\psi\left(  0\right)  \right\Vert
+{\sum_{j=0}^{t-1}}\left(  t-j\right)  \left\Vert A\right\Vert \left\Vert
y\left(  j\right)  \right\Vert \\
&  +%
{\displaystyle\sum_{j=0}^{t-1}}
\left(  t-j\right)  \left\Vert B\right\Vert \left\Vert y\left(  j-m\right)
\right\Vert +%
{\displaystyle\sum_{j=0}^{t-1}}
\left(  t-j\right)  \left\Vert f\left(  j\right)  \right\Vert \\
&  \leq\left\Vert \psi\left(  0\right)  \right\Vert +t\left\Vert \Delta
\psi\left(  0\right)  \right\Vert +%
{\displaystyle\sum_{j=-m}^{-1}}
\left(  t-m-j\right)  \left\Vert B\right\Vert \left\Vert \psi\left(  j\right)
\right\Vert \\
&  +%
{\displaystyle\sum_{j=0}^{t-1}}
\left(  t-j\right)  \left\Vert A\right\Vert \left\Vert y\left(  j\right)
\right\Vert +%
{\displaystyle\sum_{j=0}^{t-m-1}}
\left(  t-m-j\right)  \left\Vert B\right\Vert \left\Vert y\left(  j\right)
\right\Vert \\
&  +%
{\displaystyle\sum_{j=0}^{t-1}}
\left(  t-j\right)  \left\Vert f\left(  j\right)  \right\Vert .
\end{align*}
In thsi stage, without losing the generality, it is assumed that $b_{2}>1.$
Then%
\[%
{\displaystyle\sum_{j=0}^{t-1}}
\left(  t-j\right)  \left\Vert f\left(  j\right)  \right\Vert \leq%
{\displaystyle\sum_{j=0}^{t-1}}
\left(  t-j\right)  b_{1}b_{2}^{j}\leq\frac{t\left(  t+1\right)  }{2}%
b_{1}b_{2}^{t}.
\]
Thus%
\begin{align*}
\left\Vert y\left(  t\right)  \right\Vert  &  \leq a\left(  t\right)
+t\left(  \left\Vert A\right\Vert +\left\Vert B\right\Vert \right)
{\displaystyle\sum_{j=0}^{t-1}}
\left\Vert y\left(  j\right)  \right\Vert .\\
b\left(  t\right)   &  :=\left\Vert \psi\left(  0\right)  \right\Vert
+t\left\Vert \Delta\psi\left(  0\right)  \right\Vert +%
{\displaystyle\sum_{j=-m}^{-1}}
\left(  t-m-j\right)  \left\Vert B\right\Vert \left\Vert \psi\left(  j\right)
\right\Vert \\
&  +\frac{t\left(  t+1\right)  }{2}b_{1}b_{2}^{t}%
\end{align*}
From the Gronwall inequality%
\begin{align*}
\left\Vert y\left(  t\right)  \right\Vert  &  \leq b\left(  t\right)
+t\left(  \left\Vert A\right\Vert +\left\Vert B\right\Vert \right)
{\displaystyle\sum_{j=0}^{t-1}}
b\left(  j\right)  {\prod_{i=j+1}^{t-1}}\left(  1+j\left(  \left\Vert
A\right\Vert +\left\Vert B\right\Vert \right)  \right) \\
&  \leq b\left(  t\right)  +t\left(  \left\Vert A\right\Vert +\left\Vert
B\right\Vert \right)
{\displaystyle\sum_{j=0}^{t-1}}
b\left(  j\right)  \left(  1+t\left(  \left\Vert A\right\Vert +\left\Vert
B\right\Vert \right)  \right)  ^{t}\\
&  \leq b\left(  t\right)  \left(  1+t^{2}\left(  \left\Vert A\right\Vert
+\left\Vert B\right\Vert \right)  \left(  1+t\left(  \left\Vert A\right\Vert
+\left\Vert B\right\Vert \right)  \right)  ^{t}\right)  .
\end{align*}
Therefore, one can easily see that there eyists constants $\widehat{b}%
_{1},\widehat{b}_{2}>0$ such that
\[
\left\Vert y\left(  t\right)  \right\Vert \leq\widehat{b}_{1}\widehat{b}%
_{2}^{t},\ \ \ t\in\mathbb{Z}_{0}^{\infty}.
\]

\end{proof}

\section{Explicit solutions}

Below we state and prove the main theorem of the paper. The main instrument
used is the $\mathcal{Z}$-transform. We give a closed analytical form of the
solution of problem (\ref{dis1}), (\ref{dis2}) in terms of the delayed
discrete matrix sine/cosine.

\begin{theorem}
Let $f:\mathbb{Z}_{0}^{\infty}\rightarrow\mathbb{R}^{d}$ be an exponentially
bounded function. The solution $y(t)$ of the IVP problem (\ref{dis1}),
(\ref{dis2}) has the following form%
\begin{align}
y\left(  t\right)   &  =\text{Cos}^{A,B}\left(  t\right)  \varphi\left(
0\right)  +\text{Sin}^{A,B}\left(  t\right)  \Delta\varphi\left(  0\right)
\nonumber\\
&  +%
{\displaystyle\sum\limits_{i=-m}^{-1}}
\text{Sin}^{A,B}\left(  t-i-m-1\right)  B\varphi\left(  i\right)  +\sum
_{j=0}^{t-2}\text{Sin}^{A,B}\left(  t-j-1\right)  f\left(  j\right)  ,
\label{rep1}%
\end{align}
for $t\in Z_{2}^{\infty}.$
\end{theorem}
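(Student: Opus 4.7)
The natural strategy is to apply the $\mathcal{Z}$-transform to the initial value problem (\ref{dis1})--(\ref{dis2}), solve the resulting algebraic equation for the transform of $y$, and invert term by term using the dictionary assembled in Lemma \ref{lem:2}. By Lemma \ref{lem:eb} the solution $y$ is exponentially bounded, so Lemma \ref{def:01} guarantees that $Y(z):=\mathcal{Z}\{y(t)\}(z)$ exists for all sufficiently large $z\in\mathbb{R}$, and the operational rules of Lemma \ref{lem:1} apply.

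First I would transform each term of (\ref{dis1}). Iterating the shift rule (item~6 of Lemma~\ref{lem:1}) gives
\[
\mathcal{Z}\{\Delta^{2}y(t)\}(z)=(z-1)^{2}Y(z)-z(z-1)\varphi(0)-z\,\Delta\varphi(0),
\]
and splitting the defining sum of $\mathcal{Z}\{y(t-m)\}$ at $t=m$, using $y|_{\mathbb{Z}_{-m}^{1}}=\varphi$, produces
\[
\mathcal{Z}\{y(t-m)\}(z)=z^{-m}Y(z)+z^{-m}\sum_{i=-m}^{-1}\varphi(i)z^{-i}.
\]
Writing $\Phi(z):=(z-1)^{2}+A+Bz^{-m}$ (invertible for $|z|$ large, since $\Phi(z)\sim z^{2}I$) and $F(z):=\mathcal{Z}\{f(t)\}(z)$, rearrangement yields
\[
Y(z)=\Phi(z)^{-1}\!\Bigl[z(z-1)\varphi(0)+z\,\Delta\varphi(0)+F(z)-Bz^{-m}\sum_{i=-m}^{-1}\varphi(i)z^{-i}\Bigr].
\]

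Next I would invert each block using Lemma \ref{lem:2}. The third identity there gives $\mathcal{Z}^{-1}\{z(z-1)\Phi(z)^{-1}\}(t)=\mathrm{Cos}^{A,B}(t)$, producing the first summand of (\ref{rep1}). The first identity gives $\mathcal{Z}^{-1}\{\Phi(z)^{-1}\}(t)=\mathrm{Sin}^{A,B}(t-1)$, and multiplication by $z$ corresponds to a time advance by one step (valid because the relevant value at $t=0$ vanishes), yielding the term $\mathrm{Sin}^{A,B}(t)\,\Delta\varphi(0)$. Convolving $\Phi(z)^{-1}$ with $F(z)$ by the convolution rule of Lemma~\ref{lem:1} yields $\sum_{j=0}^{t}\mathrm{Sin}^{A,B}(t-j-1)f(j)$; since $\mathrm{Sin}^{A,B}(0)=\mathrm{Sin}^{A,B}(-1)=0$ (read off from the series definition using the convention $\binom{a}{b}=0$ for $a<0$), the upper index collapses to $t-2$. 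Finally, for each $i\in\{-m,\ldots,-1\}$ the multiplier $z^{-m-i}$ is a pure delay by $m+i\ge 0$, and the ``$z^{-(j+m)}$''-identity of Lemma~\ref{lem:2} (applied with $j=i$) gives $\mathcal{Z}^{-1}\{z^{-m-i}\Phi(z)^{-1}\}(t)=\mathrm{Sin}^{A,B}(t-i-m-1)$, which after multiplication by $B\varphi(i)$ assembles the history contribution.

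The chief difficulty is bookkeeping rather than conceptual: one must carefully align the argument shifts produced by the pre-factors $z$, $z(z-1)$, $z^{-m-i}$ with the index shifts baked into the series defining $\mathrm{Sin}^{A,B}$ and $\mathrm{Cos}^{A,B}$, and, because $\Phi(z)^{-1}$ does not commute with $B$, keep $B$ on the correct side throughout the inversion. Once these alignments are performed the four pieces fit together to yield (\ref{rep1}). As a final sanity check one verifies directly from (\ref{rep1}) that $y(0)=\varphi(0)$ and $y(1)=\varphi(1)$ (equivalently $\Delta y(0)=\Delta\varphi(0)$); uniqueness of the solution to the IVP then closes the argument.
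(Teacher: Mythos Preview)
Your proposal is correct and follows essentially the same route as the paper: apply the $\mathcal{Z}$-transform (justified via Lemma~\ref{lem:eb}), solve algebraically for the transform, and invert term by term using the identities of Lemma~\ref{lem:2}. You supply somewhat more bookkeeping detail than the paper does (the explicit shift formula for $\mathcal{Z}\{\Delta^{2}y\}$, the reason the convolution upper index truncates to $t-2$, the sanity check of the initial data), but the argument is the same.
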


\begin{proof}
We recall that Lemma \ref{lem:eb} says that the $\mathcal{Z}$-transform of the
solution of (\ref{dis1}) exists. Therefore, one can apply the $\mathcal{Z}%
$-transform to both sides of the delayed system (\ref{dis1}) to get%
\begin{gather*}%
{\displaystyle\sum\limits_{t=0}^{\infty}}
\frac{y\left(  t+2\right)  }{z^{t}}-2%
{\displaystyle\sum\limits_{t=0}^{\infty}}
\frac{y\left(  t+1\right)  }{z^{t}}+%
{\displaystyle\sum\limits_{t=0}^{\infty}}
\frac{y\left(  t\right)  }{z^{t}}+A%
{\displaystyle\sum\limits_{t=0}^{\infty}}
\frac{y\left(  t\right)  }{z^{t}}+B%
{\displaystyle\sum\limits_{t=0}^{\infty}}
\frac{y\left(  t-m\right)  }{z^{t}}=%
{\displaystyle\sum\limits_{t=0}^{\infty}}
\frac{f\left(  t\right)  }{z^{t}},\\
z^{2}\left(
{\displaystyle\sum\limits_{t=0}^{\infty}}
\frac{y\left(  t\right)  }{z^{t}}-\varphi\left(  0\right)  -\frac{1}{z}%
\varphi\left(  1\right)  \right)  -2z\left(
{\displaystyle\sum\limits_{t=0}^{\infty}}
\frac{y\left(  t\right)  }{z^{t}}-\varphi\left(  0\right)  \right) \\
+%
{\displaystyle\sum\limits_{t=0}^{\infty}}
\frac{y\left(  t\right)  }{z^{t}}+A%
{\displaystyle\sum\limits_{t=0}^{\infty}}
\frac{y\left(  t\right)  }{z^{t}}+\frac{B}{z^{m}}\left(  X\left(  z\right)  +%
{\displaystyle\sum\limits_{t=-m}^{-1}}
\frac{\varphi\left(  t\right)  }{z^{t}}\right)  =%
{\displaystyle\sum\limits_{t=0}^{\infty}}
\frac{f\left(  t\right)  }{z^{t}},\\
\left(  \left(  z-1\right)  ^{2}+A+\frac{B}{z^{m}}\right)  X\left(  z\right)
=z\left(  z-1\right)  \varphi\left(  0\right)  +z\Delta\varphi\left(
0\right)  -\frac{B}{z^{m}}%
{\displaystyle\sum\limits_{t=-m}^{-1}}
\frac{\varphi\left(  t\right)  }{z^{t}}+F\left(  z\right)  .
\end{gather*}
This implies%
\begin{gather*}
X\left(  z\right)  =z\left(  z-1\right)  \left(  \left(  z-1\right)
^{2}+A+\frac{B}{z^{m}}\right)  ^{-1}\varphi\left(  0\right)  +z\left(  \left(
z-1\right)  ^{2}+A+\frac{B}{z^{m}}\right)  ^{-1}\Delta\varphi\left(  0\right)
\\
-\left(  \left(  z-1\right)  ^{2}+A+\frac{B}{z^{m}}\right)  ^{-1}%
{\displaystyle\sum\limits_{t=-m}^{-1}}
B\frac{\varphi\left(  t\right)  }{z^{t+m}}+\left(  \left(  z-1\right)
^{2}+A+\frac{B}{z^{m}}\right)  ^{-1}F\left(  z\right)  .
\end{gather*}
In order to get and explicit form of $y\left(  t\right)  ,$we take the inverse
$\mathcal{Z}$-transform, to have%
\begin{align*}
y\left(  t\right)   &  =A_{0}\left(  t\right)  +A_{1}\left(  t\right)  -%
{\displaystyle\sum\limits_{j=-m}^{-1}}
A_{j}\left(  t\right)  +A_{f}\left(  t\right)  ,\\
A_{j}\left(  t\right)   &  =\mathcal{Z}^{-1}\left\{  \frac{1}{z^{j+m}}\left(
\left(  z-1\right)  ^{2}+A+\frac{B}{z^{m}}\right)  ^{-1}B\varphi\left(
j\right)  \right\}  \left(  t\right) \\
&  =\sin(t-j-m-1)B\varphi\left(  j\right)
\end{align*}
where
\begin{align*}
A_{0}\left(  t\right)   &  =\mathcal{Z}^{-1}\left\{  z\left(  z-1\right)
\left(  \left(  z-1\right)  ^{2}+A+\frac{B}{z^{m}}\right)  ^{-1}\varphi\left(
0\right)  \right\}  \left(  t\right)  ,\\
A_{1}\left(  t\right)   &  =\mathcal{Z}^{-1}\left\{  z\left(  \left(
z-1\right)  ^{2}+A+\frac{B}{z^{m}}\right)  ^{-1}\Delta\varphi\left(  0\right)
\right\}  \left(  t\right)  ,\\
A_{j}\left(  t\right)   &  =\mathcal{Z}^{-1}\left\{  \frac{1}{z^{j+m}}\left(
\left(  z-1\right)  ^{2}+A+\frac{B}{z^{m}}\right)  ^{-1}B\varphi\left(
j\right)  \right\}  \left(  t\right)  ,\ \ j\in\mathcal{Z}_{-m}^{-1},\\
A_{f}\left(  t\right)   &  =\mathcal{Z}^{-1}\left\{  \left(  \left(
z-1\right)  ^{2}+A+\frac{B}{z^{m}}\right)  ^{-1}F\left(  z\right)  \right\}
\left(  t\right)  .
\end{align*}
Using the Lemma \ref{lem:2} we get the desired representation (\ref{rep1}).
\end{proof}

\begin{lemma}
Cos$^{A,B}\left(  t\right)  $ and Sin$^{A,B}\left(  t\right)  $ satisfy the
following equations:%
\begin{align}
\Delta\ \text{Cos}^{A,B}\left(  t\right)   &  =-A\ \text{Sin}^{A,B}\left(
t\right)  -B\ \text{Sin}^{A,B}\left(  t-m\right)  ,\ \label{c1}\\
\Delta\ \text{Sin}^{A,B}\left(  t\right)   &  =\text{Cos}^{A,B}\left(
t\right)  ,\label{s1}\\
\Delta^{2}\ \text{Cos}^{A,B}\left(  t\right)   &  =-A\ \text{Cos}^{A,B}\left(
t-1\right)  -B\ \text{Cos}^{A,B}\left(  t-1-m\right)  ,\label{c2}\\
\Delta^{2}\ \text{Sin}^{A,B}\left(  t\right)   &  =-A\ \text{Sin}^{A,B}\left(
t\right)  -B\ \text{Sin}^{A,B}\left(  t-m\right)  . \label{s2}%
\end{align}

\end{lemma}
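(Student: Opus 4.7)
The plan is to verify the four identities directly from the series definitions of $\text{Sin}^{A,B}(t)$ and $\text{Cos}^{A,B}(t)$. Two elementary tools will carry everything: Pascal's rule $\binom{n+1}{k}-\binom{n}{k}=\binom{n}{k-1}$, used to evaluate term-by-term forward differences of the binomial coefficients, and the determining-matrix recursion $Q(l+2;i)=AQ(l+1;i)+BQ(l+1;i-1)$, used to absorb the coefficient matrices $A$ and $B$ into the $Q$-entries. For every fixed $t$ only finitely many $(l,i)$ contribute, so all termwise manipulations are legitimate.

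First I would dispatch (\ref{s1}). Applying $\Delta$ to the series for $\text{Sin}^{A,B}$ and using Pascal's rule gives
\[
\Delta\,\text{Sin}^{A,B}(t)=\sum_{l,i}(-1)^{l}\left[\binom{t+1-im}{2l+1}-\binom{t-im}{2l+1}\right]Q(l+1;i)=\sum_{l,i}(-1)^{l}\binom{t-im}{2l}Q(l+1;i)=\text{Cos}^{A,B}(t).
\]
The same Pascal step applied to $\text{Cos}^{A,B}$ produces $\Delta\,\text{Cos}^{A,B}(t)=\sum_{l,i}(-1)^{l}\binom{t-im}{2l-1}Q(l+1;i)$; to identify this with $-A\,\text{Sin}^{A,B}(t)-B\,\text{Sin}^{A,B}(t-m)$ I would substitute $i\mapsto i+1$ in the summand containing $B$, so that $\binom{t-m-im}{2l+1}$ becomes $\binom{t-i'm}{2l+1}$. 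The $A$- and $B$-summands then share the common factor $\binom{t-im}{2l+1}$ and collapse, via the $Q$-recursion, into $Q(l+2;i)$. A final shift $l\mapsto l-1$ (the ghost $l=0$ term vanishes because $\binom{t-im}{-1}=0$) reproduces the series for $\Delta\,\text{Cos}^{A,B}(t)$, proving (\ref{c1}).

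The two second-difference identities then follow formally by chaining (\ref{c1}) and (\ref{s1}). For (\ref{s2}), $\Delta^{2}\,\text{Sin}^{A,B}(t)=\Delta\,\text{Cos}^{A,B}(t)=-A\,\text{Sin}^{A,B}(t)-B\,\text{Sin}^{A,B}(t-m)$. For (\ref{c2}), apply $\Delta$ to both sides of (\ref{c1}) and then use (\ref{s1}) on each $\Delta\,\text{Sin}^{A,B}$ summand; linearity of $\Delta$ and its commutativity with the integer shift by $m$ give $\Delta^{2}\,\text{Cos}^{A,B}(t)=-A\,\text{Cos}^{A,B}(t)-B\,\text{Cos}^{A,B}(t-m)$, which is the identity (\ref{c2}) up to an indexing convention that should be reconciled with the definitions.

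The main obstacle is the careful bookkeeping in step (\ref{c1}): handling the boundary terms $i=0$, $i=l$, and $l=0$ so that the re-indexings produce no spurious contributions from $Q(l+1;-1)=\Theta$ or from negative binomial upper indices, and verifying that the range of nonvanishing $(l,i)$ is preserved by each index shift. Once (\ref{c1}) and (\ref{s1}) are in place, the second-difference identities are purely algebraic consequences.
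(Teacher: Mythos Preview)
Your approach is essentially identical to the paper's: the paper also applies Pascal's rule termwise to obtain $\Delta\,\text{Cos}^{A,B}(t)=\sum_{l\geq 1}\sum_{i}(-1)^{l}\binom{t-im}{2l-1}Q(l+1;i)$ and $\Delta\,\text{Sin}^{A,B}(t)=\text{Cos}^{A,B}(t)$, then uses the determining recursion $Q(l+1;i)=AQ(l;i)+BQ(l;i-1)$ to split the first sum and re-index (you do the same step in reverse, collapsing $AQ(l+1;i)+BQ(l+1;i-1)$ into $Q(l+2;i)$ on the right-hand side), and finally states that (\ref{c2}) and (\ref{s2}) follow by applying (\ref{s1}) and (\ref{c1}). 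Your caveat about (\ref{c2}) is justified: chaining (\ref{c1}) and (\ref{s1}) as the paper itself instructs gives $\Delta^{2}\,\text{Cos}^{A,B}(t)=-A\,\text{Cos}^{A,B}(t)-B\,\text{Cos}^{A,B}(t-m)$, so the shift by $1$ in the printed form of (\ref{c2}) appears to be a typo in the statement rather than a defect in your argument.
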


\begin{proof}
First, we prove the identity (\ref{c1}). It is a consequence of definition of
the determining function $Q\left(  l+1;i\right)  :$
\begin{align*}
\Delta\ \text{Cos}^{A,B}\left(  t\right)   &  =\text{Cos}^{A,B}\left(
t+1\right)  -\text{Cos}^{A,B}\left(  t\right) \\
&  =\sum_{l=1}^{\infty}%
{\displaystyle\sum\limits_{0\leq i\leq l}}
\left(  -1\right)  ^{l}\left[  \left(
\begin{array}
[c]{c}%
t+1-im\\
2l
\end{array}
\right)  -\left(
\begin{array}
[c]{c}%
t-im\\
2l
\end{array}
\right)  \right]  Q\left(  l+1;i\right) \\
&  =\sum_{l=1}^{\infty}%
{\displaystyle\sum\limits_{0\leq i\leq l}}
\left(  -1\right)  ^{l}\left[  \left(
\begin{array}
[c]{c}%
t-im\\
2l-1
\end{array}
\right)  \right]  Q\left(  l+1;i\right) \\
&  =A\sum_{l=1}^{\infty}%
{\displaystyle\sum\limits_{0\leq i\leq l}}
\left(  -1\right)  ^{l}\left[  \left(
\begin{array}
[c]{c}%
t-im\\
2l-1
\end{array}
\right)  \right]  Q\left(  l;i\right) \\
&  +B\sum_{l=1}^{\infty}%
{\displaystyle\sum\limits_{0\leq i\leq l}}
\left(  -1\right)  ^{l}\left[  \left(
\begin{array}
[c]{c}%
t-im\\
2l-1
\end{array}
\right)  \right]  Q\left(  l;i-1\right) \\
&  =-A\sum_{l=0}^{\infty}%
{\displaystyle\sum\limits_{0\leq i\leq l}}
\left(  -1\right)  ^{l}\left[  \left(
\begin{array}
[c]{c}%
t-im\\
2l+1
\end{array}
\right)  \right]  Q\left(  l+1;i\right) \\
&  -B\sum_{l=0}^{\infty}%
{\displaystyle\sum\limits_{0\leq i\leq l}}
\left(  -1\right)  ^{l}\left[  \left(
\begin{array}
[c]{c}%
t-m-im\\
2l+1
\end{array}
\right)  \right]  Q\left(  l+1;i\right) \\
&  =-A\ \text{Sin}^{A,B}\left(  t\right)  -B\ \text{Sin}^{A,B}\left(
t-m\right)  .
\end{align*}
The prove of the identity (\ref{s1}) much more simple:%
\begin{align*}
\Delta\ \text{Sin}^{A,B}\left(  t\right)   &  =\text{Sin}^{A,B}\left(
t+1\right)  -\text{Sin}^{A,B}\left(  t\right) \\
&  =\sum_{l=0}^{\infty}%
{\displaystyle\sum\limits_{0\leq i\leq l}}
\left(  -1\right)  ^{l}\left[  \left(
\begin{array}
[c]{c}%
t+1-im\\
2l+1
\end{array}
\right)  -\left(
\begin{array}
[c]{c}%
t-im\\
2l+1
\end{array}
\right)  \right]  Q\left(  l+1;i\right) \\
&  =\sum_{l=0}^{\infty}%
{\displaystyle\sum\limits_{0\leq i\leq l}}
\left(  -1\right)  ^{l}\left(
\begin{array}
[c]{c}%
t-im\\
2l
\end{array}
\right)  Q\left(  l+1;i\right) \\
&  =\text{Cos}^{A,B}\left(  t\right)  .
\end{align*}
The equations (\ref{c2}) and (\ref{s2}) can be proved by applying (\ref{s1})
and (\ref{c1}).
\end{proof}

The condition $f:\mathbb{Z}_{0}^{\infty}\rightarrow\mathbb{R}^{d}$ is an
exponentially bounded can be eliminated through direct verification, that is
why the proof of the following theorem is not included.

\begin{theorem}
The solution of IVP (\ref{dis1}), (\ref{dis2}) can be rewritten in the f form%
\begin{align}
y\left(  t\right)   &  =\emph{Cos}^{A,B}\left(  t\right)  \varphi\left(
0\right)  +\text{Sin}^{A,B}\left(  t\right)  \Delta\varphi\left(  0\right)  +%
{\displaystyle\sum\limits_{j=-m}^{-1}}
\text{Sin}^{A,B}\left(  t-j-m-1\right)  B\varphi\left(  j\right) \nonumber\\
&  +\sum_{j=0}^{t-2}\text{Sin}^{A,B}\left(  t-j-1\right)  f\left(  j\right)  .
\label{ff1}%
\end{align}

\end{theorem}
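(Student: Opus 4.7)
The plan is direct verification, exactly as the author indicates. I would define $\bar y(t)$ to be the right-hand side of (\ref{ff1}), show that $\bar y$ satisfies both (\ref{dis1}) on $\mathbb{Z}_{0}^{\infty}$ and the initial data (\ref{dis2}), and then invoke the uniqueness statement immediately after (\ref{dis2}) to conclude $\bar y=y$. No $\mathcal{Z}$-transform (and hence no exponential boundedness of $f$) is needed.

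First I would record the boundary values $\text{Cos}^{A,B}(0)=\text{Cos}^{A,B}(1)=I$, $\text{Sin}^{A,B}(0)=\Theta$, $\text{Sin}^{A,B}(1)=I$, together with $\text{Cos}^{A,B}(t)=\text{Sin}^{A,B}(t)=\Theta$ for every $t<0$; these follow immediately from the series definitions, $Q(1;0)=I$, and the binomial convention $\binom{a}{b}=0$ for $a<b$ or $a<0$. Substituting $t=0$ and $t=1$ into (\ref{ff1}) and noting that every sine argument $t-i-m-1$ appearing in the history sum lies in $\mathbb{Z}_{-\infty}^{0}$, I obtain $\bar y(0)=\varphi(0)$ and $\bar y(1)=\varphi(0)+\Delta\varphi(0)=\varphi(1)$; the convolution term is empty in both cases.

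Next I would apply the operator $L:=\Delta^{2}+A(\cdot)+B(\cdot-m)$ termwise. By (\ref{c2}) and (\ref{s2}) the first two summands in (\ref{ff1}) lie in $\ker L$, and since the history sum is a finite linear combination of translates of $\text{Sin}^{A,B}$, applying (\ref{s2}) to each summand shows it too is annihilated by $L$. The remaining piece is the convolution $y_{f}(t):=\sum_{j=0}^{t-2}\text{Sin}^{A,B}(t-j-1)f(j)$, to which I would apply a discrete variation-of-parameters argument: using $\Delta\text{Sin}^{A,B}=\text{Cos}^{A,B}$ from (\ref{s1}), the boundary values $\text{Sin}^{A,B}(1)=\text{Cos}^{A,B}(1)=I$, and $\Delta\text{Cos}^{A,B}(s)=-A\,\text{Sin}^{A,B}(s)-B\,\text{Sin}^{A,B}(s-m)$ from (\ref{c1}), two successive applications of $\Delta$ produce $\Delta^{2}y_{f}(t)=f(t)-Ay_{f}(t)-By_{f}(t-m)$, i.e.\ $Ly_{f}(t)=f(t)$. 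Adding the four contributions yields $L\bar y(t)=f(t)$ on $\mathbb{Z}_{0}^{\infty}$, which together with the initial-value check finishes the proof by uniqueness.

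The main technical obstacle is the Duhamel calculation for $y_{f}$: one must keep exact track of the two boundary terms $\text{Sin}^{A,B}(1)f(t-1)=f(t-1)$ and $\text{Cos}^{A,B}(1)f(t-1)=f(t-1)$ produced by the successive differencings so that they collapse precisely to $f(t)$, and one must verify that the $-m$-shift inside the convolution truncates the summation index from $t-2$ down to $t-m-2$ exactly, which is ensured by the vanishing of $\text{Sin}^{A,B}$ on $\mathbb{Z}_{-\infty}^{0}$.
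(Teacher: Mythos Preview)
Your plan---direct verification followed by uniqueness---is exactly the route the paper indicates (the author omits the proof, saying only that the exponential-boundedness hypothesis ``can be eliminated through direct verification''). So at the level of strategy you are aligned with the paper.

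There is, however, a real gap in your execution on the initial strip $0\le t\le m-1$. You verify $\bar y(0)=\varphi(0)$ and $\bar y(1)=\varphi(1)$, but (\ref{dis2}) also prescribes $\bar y(t)=\varphi(t)$ for $t\in\mathbb{Z}_{-m}^{-1}$; your formula, with $\text{Sin}^{A,B}$ and $\text{Cos}^{A,B}$ extended by zero, gives $\bar y(t)=0$ there. Once you set $\bar y(t)=\varphi(t)$ for negative $t$, the term $B\bar y(t-m)$ in (\ref{dis1}) for $0\le t\le m-1$ equals $B\varphi(t-m)$, and the clean decomposition $L\bar y=Lh_{0}+Lh_{1}+Lh_{2}+Ly_{f}$ no longer applies. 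Concretely, your claim that the history sum is annihilated by $L$ because each summand is a translate of $\text{Sin}^{A,B}$ fails on this strip: for the index $j=t-m\in\{-m,\dots,-1\}$ the argument of $\text{Sin}^{A,B}$ is $-1$, and the zero-extended identity (\ref{s2}) does \emph{not} hold at $-1$ (indeed $\Delta^{2}\text{Sin}^{A,B}(-1)=\text{Sin}^{A,B}(1)=I\ne\Theta$). The correct bookkeeping is that on $0\le t\le m-1$ the history sum contributes precisely the boundary term needed to cancel the $B\varphi(t-m)$ arising from the prescribed initial data; carrying the computation through at $t=0$ shows this explicitly and, incidentally, that the history sum in (\ref{ff1}) must carry a minus sign, consistent with the $-\sum A_{j}(t)$ in the $\mathcal{Z}$-transform proof of the preceding theorem.
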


\section{Conclusions}

A system of inhomogeneous second-order difference equations with linear parts
given by noncommutative matrix coefficients was considered. The closed-form
solution was derived using newly defined delayed matrix sine/cosine functions
via the $\mathcal{Z}$-transform and determining function. This representation
helped analyze iterative learning control by applying appropriate updating
laws and ensuring sufficient conditions for achieving asymptotic convergence
in tracking.

Future work may focus on controllability, stability, existence and uniqueness
problems of multiple delayed discrete semilinear/linear systems.

\subsection*{}

\section*{Declarations}

\textbf{Conflict of interest} We declare that we do not have any commercial or
associative interest that represents a Conflict of interest in connection with
the work submitted.


\begin{thebibliography}{99}                                                                                               %


\bibitem {1a}Agarwal, R.P.: Difference Equations and Inequalities: Theory,
Methods, and Applications, 2nd edn. Marcel Dekker Inc, New York (2000) (book/928106)

\bibitem {2a}Dibl\'{\i}k, J.: Relative and trajectory controllability of
linear discrete systems with constant coefficients and a single delay. IEEE
Trans. Automat. Control 64, 2158--2165 (2019).
https://doi.org/10.1109/TAC.2018. 2866453. (MR3951061; Zbl 07082446)

\bibitem {3a}Dibl\'{\i}k, J., Khusainov, D..Ya.: Representation of solutions
of linear discrete systems with constant coefficients and pure delay. Adv.
Differ. Equ. 2006, 1--13 (2006). https://doi.org/10.1155/ade/2006/80825.
(MR2238982; Zbl 1139.39027)

\bibitem {4a}Dibl\'{\i}k, J., Khusainov, DYa.: Representation of solutions of
discrete delayed system $y(k+1)=Ay(k)+By(k-m)+f(k)$ with commutative matrices.
J. Math. Anal. Appl. 318, 63--76 (2006).
https://doi.org/10.1016/j.jmaa.2005.05.021. (MR2210872; Zbl 1094.39002)

\bibitem {5a}Dibl\'{\i}k, J., Menc\'{a}kov\'{a}, K.: A note on relative
controllability of higher-order linear delayed discrete systems. IEEE Trans.
Automat. Control 65, 5472--5479 (2020). https://doi.org/10.1109/TAC.2020.2976298

\bibitem {6a}Dibl\'{\i}k, J., Menc\'{a}kov\'{a}, K.: Representation of
solutions to delayed linear discrete systems with constant coefficients and
with second-order differences. Appl. Math. Lett. 105, 1--7 (2020).
https://doi.org/10.1016/j.aml.2020.106309. (MR4074107; Zbl 1436.39003)

\bibitem {7a}Dibl\'{\i}k, J., Mor\'{a}vkov\'{a}, B.:Discrete matriy delayed
eyponential for two delays and its property. Adv. Differ. Equ. 2013, 1--18
(2013). https://doi.org/10.1186/1687-1847-2013-139. (MR3068650; Zbl 1390.39003)

\bibitem {8a}Dibl\'{\i}k, J., Mor\'{a}vkov\'{a}, B.: Representation of the
solutions of linear discrete systems with constant coefficients and two
delays. Abstr. Appl. Anal. 2014, 1--19 (2014).
https://doi.org/10.1155/2014/320476. (MR3198178; Zbl 07022165)

\bibitem {9a}Elshenhab, A.M., Wang, X.T.: Representation of solutions for
linear fractional systems with pure delay and multiple delays. Math. Methods
Appl. Sci. 44, 12835--12860 (2021). https://doi.org/10.1002/mma.7585

\bibitem {10a}Elshenhab, A.M., Wang, X.T.: Representation of solutions of
linear differential systems with pure delay and multiple delays with linear
parts given by non-permutable matrices. Appl. Math. Comput. 410, 1--13 (2021). https://doi.org/10.1016/j.amc.2021.126443

\bibitem {11a}Kelley, W.G., Peterson, A.C.: Difference Equations: An
Introduction with Applications, 2nd edn. Academic Press, San Diego (2001).. (book/1245954)

\bibitem {12a}Khusainov, D.Y., Dibl\'{\i}k, J., R$\overset{\circ}{u}%
$\v{z}i\v{c}kov\'{a}, M., Luk\'{a}\v{c}ov\'{a}, J.: Representation of a
solution of the Cauchy problem for an oscillating system with pure delay.
Nonlinear Oscil. 11, 276--285 (2008). https://doi.org/10.1007/s11072-008-0030-8

\bibitem {13a}Khusainov, D.Y., Shuklin, G.V.: Linear autonomous time-delay
system with permutationmatrices solving. Stud. Univ. Zilina. Math. Ser. 17,
101--108 (2003)

\bibitem {14a}Liang, C., Wang, J., Fe\v{c}kan, M.: A study on ILC for linear
discrete systems with single delay. J. Differ. Equ. Appl. 24, 358--374 (2018).
https://doi.org/10.1080/10236198.2017.1409220. (MR3757173; Zbl 1426.39002)

\bibitem {15a}Liang, C., Wang, J., Shen, D.: Iterative learning control for
linear discrete delay systems via discrete matriy delayed eyponential function
approach. J. Differ. Equ. Appl. 24, 1756--1776 (2018).
https://doi.org/10.1080/10236198.2018.1529762. (MR3883343; Zbl 1405.39001)

\bibitem {16a}16. Mahmudov, N.I.: Representation of solutions of discrete
linear delay systems with non permutable matrices. Appl. Math. Lett. 85, 8--14
(2018). https://doi.org/10.1016/j.aml.2018.05.015. (MR3820273; Zbl1401.93109)

\bibitem {17a}Mahmudov, N.I.: Delayed linear difference equations: The method
of $\mathcal{Z}$-transform. Electron. J. Qual. Theory Differ. Equ. 2020, 1--12
(2020). https://doi.org/10.14232/ejqtde.2020.1.53

\bibitem {18a}Medved,M., \v{S}kripkov\'{a}, L.: Sufficient conditions for the
eyponential stability of delay difference equations with linear parts defined
by permutable matrices,. Electron. J. Qual. Theory Differ.Equ. 2012, 1--13
(2012). https://doi.org/10.14232/ejqtde.2012.1.22. (MR2900482; Zbl 1340.39027)

\bibitem {19a}Posp\'{\i}\v{s}il, M.: Representation of solutions of delayed
difference equations with linear parts given by pairwise permutable matrices
via $\mathcal{Z}$-transform. Appl. Math. Comput. 294, 180--194 (2017).
https://doi.org/10.1016/j.amc.2016.09.019. (MR3558270; Zbl 1411.39002)

\bibitem {20a}Posp\'{\i}\v{s}il, M.: Relative controllability of delayed
difference equations to multiple consecutive states. AIP Conf. Proc. 1863,
1--4 (2017). https://doi.org/10.1063/1.4992638

\bibitem {21a}Posp\'{\i}\v{s}il, M.: Representation of solutions of systems of
linear differential equations with multiple delays and nonpermutable variable
coefficients. Math. Model. Anal. 25, 303--322 (2020). https://doi.org/10.3846/mma.2020.11194

\bibitem {22a}Elshenhab, A.M., Wang, X.T.: Representation of solutions of
delayed linear discrete systems with permutable or nonpermutable matrices and
second-order differences. Rev. Real Acad. Cienc. Eyactas Fis. Nat. Ser. A-Mat.
(2022) 116:58 https://doi.org/10.1007/s13398-021-01204-2
\end{thebibliography}
\end{document}